\newcommand{\ds}{\displaystyle}
\theoremstyle{plain}
\newtheorem{theorem}{Theorem}[section] \newtheorem*{theorem*}{Theorem}
 \newtheorem*{proposition*}{Proposition}
 \newtheorem*{lemma*}{Lemma}
 \newtheorem*{corollary*}{Corollary}
\newtheorem{remark}[theorem]{Remark} \newtheorem*{remark*}{Remark}
\theoremstyle{definition}
 \newtheorem*{definition*}{Definition}
\newtheorem{example}[theorem]{Example} \newtheorem*{example*}{Example}
 \newtheorem{assumption}[theorem]{Assumption}
\numberwithin{equation}{section}
\newcommand{\bR}{\mathbb{R}}
\newcommand{\bN}{\mathbb{N}}
\newcommand{\bE}{\mathbb{E}}
\newcommand{\bP}{\mathbb{P}}
\newcommand{\cA}{\mathcal{A}}
\newcommand{\floor}[1]{\lfloor{#1}\rfloor} 
\begin{document}

\begin{frontmatter}

\title{Bayesian Approach for Parameter Estimation of Continuous-Time Stochastic Volatility Models using Fourier Transform Methods}
\author[label1]{Milan Merkle}
\address[label1]{School of Electrical Engineering, University of Belgrade, Serbia.}
\author[label2]{Yuri F. Saporito}
\author[label2]{Rodrigo S. Targino}
\address[label2]{Escola de Matem\'atica Aplicada (EMAp), Funda\c{c}\~ao Getulio Vargas (FGV), Rio de Janeiro, Brazil}


\address{}


\begin{abstract}

We propose a two stage procedure for the estimation of the parameters of a fairly general, continuous-time stochastic volatility.
An important ingredient of the proposed method is the Cuchiero-Teichmann volatility estimator, which is based on Fourier transforms
and provides a continuous time estimate of the latent process. This estimate is then used to construct an approximate
likelihood for the parameters of interest, whose restrictions are taken into account through prior distributions.
The procedure is shown to be highly successful for constructing the posterior distribution of the parameters of a Heston
model, while limited success is achieved when applied to the highly parametrized exponential-Ornstein-Uhlenbeck.

\end{abstract}

\begin{keyword}
Parameter Estimation, Stochastic Volatility, Fourier methods, Cuchiero-Teichmann estimator, Heston model, Bayesian estimation.

\end{keyword}

\end{frontmatter}

\section{Introduction}


\medskip

For a given filtered probability space $(\Omega, \cA, (\cA_t)_{t\in [0,T]}, \bP)$ with a Brownian motion $W$,
a process $X$  is called standard It\^{o} process if it allows the integral representation
\begin{equation}
\label{sip}
X_t  = x_0 +\int_0^t \mu_s d s + \int_0^t \sigma_s d W_s,\quad 0\leq t \leq T,
\end{equation}
where $\mu$ (drift)  and $\sigma>0$ (diffusion coefficient) are adapted measurable processes that satisfy certain conditions
to ensure existence of integrals (see, for example, \cite{rogwil}). The  diffusion coefficient  of an It\^{o} processes,
known as volatility in financial models,
has been studied intensely due to its immense significance  in applications. If the volatility effectively depends on $\omega\in \Omega$ via
another standard It\^o process $V_t$ with
\begin{equation}
\label{sipv}
V_t = v_0 +\int_0^t \mu^V_s d s + \int_0^t \sigma^V_s d W_s^V,\quad 0\leq t \leq T,
\end{equation}
where $W^V$ is a Brownian motion, we say that the process (\ref{sip}) is equipped with stochastic volatility.
From the theory of It\^{o} processes it follows that $\int_0^t \sigma^2_s d s$ (the integrated realized variance) is equal to quadratic
variation of  $X$ in the interval $[0,t]$, and can be estimated  from a discretely observed path of $X$ as  the sum of squares of increments
on $[0,t]$.  Then  $\sigma^2 $ as a function of $t$ (instantaneous variance) can be recovered from the estimate by differentiation.
However, the implementation with high frequency samples from real time series of asset prices showed certain drawbacks due to the market noise, as
explained and analyzed in Zhou \cite{Zhou1996} and  Zhang et al. \cite{twoscales+}, see also  A{\"{i}}t-Sahalia et al. \cite{forecast2008} for a large-scale
simulation study of the integrated variance estimator from \cite{twoscales+}.
Malliavin and Mancino (\cite{mama2002a+}, \cite{mama2009+}) offered a non-parametric method for a direct estimation of instantaneous variance, based on
Fouried transform.  Using similar ideas, Cuchiero and Teichmann \cite{cute2015+} proposed a robust estimator, which allows processes with jumps.
 Both Malliavin-Mancino and Cuchiero-Teichmann estimators are applicable in a multidimensional setup. There are several other instantaneous variance
 estimators (see \cite{cute2015+} and \cite{mama2009+} for references).

Diffusion processes usually involve some parameters, which, in practice, would have to be estimated.
There is a voluminous literature on Bayesian inference in this context (for example, \cite{elchsh2001+}, \cite{Erak2001+},
\cite{karodell2010+},  \cite{sore2004}). In this paper we consider a general family of continuous-time stochastic volatility models with five parameters. With particular specifications, this family covers several well known models, such as the Heston and exponential-Ornstein-Uhlenbeck (exp-OU) models. We propose a two-stage method for parameter estimation. At  the first stage, we recover  the realized volatility process using Cucheiro-Teichmann estimator, and  at the second stage, conditionally  on the recovered volatility,
we estimate the parameters using Bayesian technique.

\section{A general stochastic volatility model: properties and volatility estimation}

In this section we present the model and its properties, together with the one-dimensional version of the Cucheiro-Teichmann procedure \cite{cute2015+}.
The Bayesian parameter estimation will be discussed in the Section \ref{sec:estimation}.

\subsection{The model and its properties}

\label{modelp}

Let $W$ and $W^{\perp}$ be independent standard Brownian motions on  the fixed probability space $(\Omega, \cA, \bP)$.  Consider the following stochastic volatility model in the differential form,  as a system of stochastic differential equations (SDEs):
\begin{align}\label{eq:sv_model}
\left\{\begin{array}{l}
dX_t = \ds \mu dt + f(V_t) dW_t,\\ \\
dV_t = \kappa(m - V_t) dt + \xi g(V_t) dW_t^V,\\ \\
W_t^V = \rho W_t + \sqrt{1-\rho^2} W_t^{\perp},
\end{array}\right.
\end{align}
where $\rho \in [-1,1]$. This parameter reflects a correlation
between $W$ and $W^V$. Other parameters of the model under consideration and their ranges are $\mu \in \bR$, $m\in \bR$, $\kappa >0$, $\xi>0$. The usual names in
the financial literature are exhibited in Table \ref{table:param}.

For example, $f(v) = g(v) = \sqrt{v}$ gives us the Heston model, see \cite{heston93}, and $f(v) = e^v, g(v) = 1$,
the exponential-Ornstein Uhlenbeck (exp-OU) model, see \cite{pesima2008+}. Different models (that consider different drifts for the volatility) could be considered with little modification of the technique presented below.
A class of models with  $f=Cg$, where $C>0$ is a specified known constant, is frequently used in applications. We call these models \textit{equi-volatility models};
the Heston model is one example of such model. The Inverse Gamma model of \cite{lalezu2016+} is also a member of this class.

The additional necessary assumptions on the model are stated as follows.

\begin{assumption}\label{ass:f_over_g}
$(i)$ $f$ and $g$ are positive functions on the support of the volatility process $V$, and the function $f$ is strictly monotone;
$(ii)$ The SDE (\ref{eq:sv_model}) has a unique strong solution, with a positive volatility process $V$.
\end{assumption}

The following theorem states that the increments $X_t - X_s$, given the volatility path, are normally distributed with mean and variance that can be
explicitly computed.
This result is  a paramount in our Bayesian estimation procedure described
in Section \ref{sec:estimation}.

\begin{theorem}\label{thm:distribution}
Under Assumption \ref{ass:f_over_g}, given the volatility path $(V_u)_{u \in [0,T]}$, the increments $X_t -X_s$, $0\leq s<t\leq T$,
are independent  over disjoint intervals and
normally distributed with mean and variance as below:
$$X_t - X_s \ | \ (V_u)_{u \in [0,T]} \sim N\left( \int_s^t \phi(V_u; \theta) du + \frac{\rho}{\xi} \int_s^t \frac{f(V_u)}{g(V_u)} dV_u, \ \ (1 - \rho^2)\int_s^t f^2(V_u) du\right)$$
where $\theta = (\mu, \kappa, m, \rho, \xi)$ and
\begin{align}\label{eq:phi}
\phi(v; \theta) &= \mu - \frac{\kappa \rho m}{\xi} \frac{f(v)}{g(v)} + \frac{\kappa \rho}{\xi} \frac{f(v)}{g(v)} v.
\end{align}
\end{theorem}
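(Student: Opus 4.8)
The plan is to rotate the pair of driving Brownian motions so that the noise in $X$ splits into a part carried by $W^V$ and a part independent of it, and then to show that conditioning on the volatility path freezes everything except a single Wiener integral driven by the independent part. This turns the $X$-increment into an affine functional of that integral, which is exactly what produces a conditional normal law.

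First I would introduce $B_t := \sqrt{1-\rho^2}\,W_t - \rho\,W_t^{\perp}$. A direct computation of quadratic and cross variations, together with the L\'evy characterization, shows that $B$ is a standard Brownian motion with $\langle W^V, B\rangle \equiv 0$; since $(W^V,B)$ is a Gaussian pair, the vanishing covariation upgrades to independence of $B$ and $W^V$. Inverting the linear relation yields $W_t = \rho\,W_t^V + \sqrt{1-\rho^2}\,B_t$, so that $dW_t = \rho\,dW_t^V + \sqrt{1-\rho^2}\,dB_t$. Substituting this into the first equation of (\ref{eq:sv_model}) and eliminating $dW^V$ via the second equation, which gives $dW_t^V = \bigl(dV_t - \kappa(m-V_t)\,dt\bigr)/(\xi g(V_t))$, the $dt$ terms collect exactly into $\phi(V_t;\theta)$ as in (\ref{eq:phi}), and one arrives at
$$dX_t = \phi(V_t;\theta)\,dt + \frac{\rho}{\xi}\frac{f(V_t)}{g(V_t)}\,dV_t + \sqrt{1-\rho^2}\,f(V_t)\,dB_t.$$
Integrating over $[s,t]$ isolates the only term that is random once the volatility is fixed, namely $\sqrt{1-\rho^2}\int_s^t f(V_u)\,dB_u$.

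The heart of the argument, and the step I expect to require the most care, is identifying the conditional law of this last integral. Let $\mathcal{G} = \sigma(V_u : u\in[0,T])$. Because (\ref{eq:sv_model}) admits a strong solution (Assumption \ref{ass:f_over_g}$(ii)$), the volatility path is a functional of $W^V$, so $\mathcal{G}\subseteq \mathcal{F}^{W^V}$; consequently the first two integrals above are $\mathcal{G}$-measurable, while $B$ is independent of $\mathcal{F}^{W^V}$ and hence of $\mathcal{G}$. I would then evaluate the conditional characteristic function $\mathbb{E}\bigl[\exp(i\lambda\int_s^t f(V_u)\,dB_u)\mid \mathcal{G}\bigr]$: conditionally on $\mathcal{G}$ the integrand is deterministic and $B$ remains a Brownian motion, so the Wiener integral is Gaussian and this expectation equals $\exp(-\tfrac12\lambda^2\int_s^t f^2(V_u)\,du)$, which pins down the conditional law as $N\bigl(0, \int_s^t f^2(V_u)\,du\bigr)$. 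Assembling the three terms then gives the stated conditional mean and the conditional variance $(1-\rho^2)\int_s^t f^2(V_u)\,du$.

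Finally, for the independence over disjoint intervals, the corresponding $B$-integrals are jointly Gaussian with vanishing conditional covariance (by the It\^o isometry applied on disjoint sets), hence conditionally independent given $\mathcal{G}$, and the remaining drift and $dV$ terms are $\mathcal{G}$-measurable constants that do not affect independence. The subtle points to watch are the measurability claim $\mathcal{G}\subseteq \mathcal{F}^{W^V}$ and the interchange that lets me treat $f(V_u)$ as a deterministic integrand under the conditioning; both rest on the strong-solution hypothesis and the independence of $B$ from $W^V$, so I would state these carefully before invoking the Wiener-integral Gaussianity.
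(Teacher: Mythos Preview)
Your argument is correct and follows essentially the same route as the paper: decompose $dW_t$ as $\rho\,dW_t^V$ plus an orthogonal Brownian increment, replace $dW_t^V$ via the $V$-equation to produce the $\phi$ and $dV$ terms, and identify the remaining Wiener integral as conditionally Gaussian. Your treatment is in fact more careful than the paper's---you explicitly construct the orthogonal Brownian motion $B$ (the paper overloads the symbol $W^{\perp}$ for this role), justify the conditional Gaussianity through the characteristic function and the strong-solution measurability $\mathcal{G}\subseteq\mathcal{F}^{W^V}$, and supply the disjoint-interval independence argument, which the paper omits.
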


\begin{proof} From (\ref{eq:sv_model}) we find that
$$\int_s^t f(V_u) dW_u = \rho\int_s^t f(V_u) dW_u^V + \sqrt{1 - \rho^2}\int_s^t f(V_u) dW_u^{\perp},$$
where $W^{\perp}$ is a Brownian motion independent of $W^V$. Then it follows that
$$\left. \int_s^t f(V_u) dW_u^{\perp} \ \right| \ (V_u)_{u \in [s,t]} \sim N\left(0, \int_s^t f^2(V_u) du \right).$$
Moreover, from the SDE describing the dynamics of the process $V$, we find that
\begin{align*}
\int_s^t f(V_u) dW_u^V = \frac{1}{\xi} \int_s^t \frac{f(V_u)}{g(V_u)} dV_u - \frac{1}{\xi} \int_s^t \frac{f(V_u)}{g(V_u)} \kappa(m - V_u) du
\end{align*}
and this yields the result.
\end{proof}

In Table \ref{table:phi}, we specify the function $\phi$ of two particular models.

\begin{table}[h!]
\centering
\begin{tabular}{lcc}
\hline
Model & Specification & $\phi(v;\theta)$ \\
\hline
\hline \\[-5pt]
Equi-Volatility & $f = g$ & $\displaystyle \mu - \frac{\kappa \rho m}{\xi}  + \frac{\kappa \rho}{\xi} v$ \\[15pt]
Exp-OU & $f(v) = e^v$ and $g \equiv 1$ & $\displaystyle  \mu - \frac{\rho \kappa m}{\xi} e^v + \frac{\rho \kappa }{\xi}v  e^v$ \\[10pt]
\hline
\end{tabular}
\caption{The function $\phi$ for the equi-volatility and Exp-OU models.}
\label{table:phi}
\end{table}

\subsection{Pathwise covariance estimation of Cuchiero and Teichmann}\label{sec:teichmann}

The estimation procedure for the parameters of the SV model discussed in the previous subsection is built on the following
estimation method of the hidden volatility process, which has been proposed and shown to be consistent in \cite{cute2015+}.
We will now describe this method. More generally, we assume that $X$ follows the dynamics
\begin{align}
dX_t = \mu_tdt + f(V_t) dW_t,
\end{align}
where $W$ is an one-dimensional Brownian motion, $\mu$ is a locally bounded process and $V$ is a continuous stochastic process.
Fix a time horizon $T > 0$ and define $s_m^n = m/n$, for $m = 0, \ldots, \floor{nT}$.
It is assumed that we observe the process $X$ at times $\{s_m^n\}_{m=0}^{\floor{nT}}$.\\

Given a continuous function $h$ with at most polynomial growth{\color{red},} the \textit{Cuchiero-Teichmann estimator} of
the instantaneous variance is given by
\begin{align}\label{eq:pathwise_estimator}
\widehat{V}_t^{n,N} = f^{-1}\left(\sqrt{\rho_h^{-1}\left(\widehat{\rho_h(V)}_t^{n,N} \right)}\right),
\end{align}
where
\begin{align}
\widehat{\rho_h(V)}_t^{n,N} &= \frac{1}{T} \sum_{k=-N}^N \left(1 - \frac{|k|}{N} \right)e^{i \frac{2\pi}{T} k t} G(X, h, k)_T^n, \\
G(X, h, k)_T^n &= \frac{1}{n} \sum_{m=1}^{\floor{nT}} e^{-i \frac{2\pi}{T} k s_{m-1}^n} h(\sqrt{n} (X_{s_m^n} - X_{s_{m-1}^n})),
\end{align}
and the function $x\mapsto \rho_h(x)$ is defined as  $\rho_h(x) = \bE[h(Z)]$, for $Z \sim N(0, x)$.

\begin{example}\label{exam:ct_estimator}
For instance, we may choose $h(x) = \cos(x)$, which gives us $\rho_h(x) = e^{-\frac{1}{2}x}$ and $\rho_h^{-1}(x) = -2\log x$.

The consistency of the estimator is guaranteed if $\ds \lim_{n, N \to +\infty} n/N^\gamma > 0$, for some $\gamma > 1$.
One might take $N = \floor{\sqrt{n}}$.\\

Even though the volatility estimator from (\ref{eq:pathwise_estimator}) is defined for continuous time
$t$, one could evaluate the process $\widehat{V}^{n,N}$ at times $t_k = kT/(2N+1)$, $k=0,\ldots, 2N+1$.
We choose $n$ and $N$ in such {\color{red} a} way that every $t_k$ is one of the $s_m^n$, i.e. we may consider both $X$ and
$\widehat{V}^{n,N}$ at times $t_k$.
\end{example}

\section{Bayesian estimation procedures for the parameters for stochastic volatility models}\label{sec:estimation}

In this section, we will describe the estimation procedure based on Theorem \ref{thm:distribution} and the volatility estimation of Cuchiero-Teichmann presented in Section \ref{sec:teichmann}.

For a sequence of equally spaced time steps $(t_k)_{k=1,\ldots,K}$, with $\Delta t = t_{k}-t_{k-1}$, we define the increments of the log-price process as $\Delta X_k = X_{t_{k}} -X_{t_{k-1}}$. Theorem \ref{thm:distribution} states that, conditional on the entire path of true volatility process $V$, the increments are independent and normally distributed:
\begin{equation} \label{eq:regression_general}
\Delta X_k \, | \, (V_u)_{u \in [0, T]} \stackrel{ind}{\sim} N\left(M_k(\theta), \ (1-\rho^2) F_k^2 \right)\,
\end{equation}
with $F_k^2 = \int_{t_{k-1}}^{t_k} f^2(V_u) du$ and
\begin{equation}
M_k(\theta) = \int_{t_{k-1}}^{t_k} \phi(V_u; \theta) du + \frac{\rho}{\xi} \int_{t_{k-1}}^{t_k} \frac{f(V_u)}{g(V_u)} dV_u.
\label{eq:conditional_mean}
\end{equation}
Then, under model (\ref{eq:regression_general}), the likelihood for the parameter $\theta$, conditional on the volatility process, is given by
\begin{align*}
\ell(\theta; \, \Delta \mathbf{x} \, | \, (V_u)_{u \in [0, T]}) = \big(2\pi(1-\rho^2)\big)^{-K/2} \left(\prod_{k=1}^K F_k^{-1} \right)\exp\left\{-\frac{1}{2(1-\rho^2)}\sum_{k=1}^K\left(\frac{\Delta x_k - M_k(\theta)}{F_k}\right)^2 \right\}.
\end{align*}
As the process $V$ is non-observable, in order to be able to perform parametric inference for $\theta$, we use an approximate likelihood approach (see, e.g., \cite{drovandi2015bayesian} for several simulation-based approaches in the context of Bayesian inference). For fixed $n$ and $N$, let $\widehat{\ell}_{n,N}$ denote the approximated likelihood, defined as
\begin{align}
\widehat{\ell}_{n,N}(\theta; \, \Delta \mathbf{ x}) = \big(2\pi(1-\rho^2)\big)^{-K/2} \left( \prod_{k=1}^K \widehat{F}^{-1}_k \right) \exp\left\{-\frac{1}{2(1-\rho^2)^2}\sum_{k=1}^K\left(\frac{\Delta x_k - \widehat{M_k}(\theta)}{\widehat{F}_k}\right)^2 \right\},\label{eq:approx_likelihood}
\end{align}
where the process $V$ is replaced by its Cuchiero-Teichmann estimate, $\widehat{V}$. In (\ref{eq:approx_likelihood}) all the integrals are approximated by quadrature, providing the following definitions:
\begin{align*}
\widehat{F}^2_k = f^2(\widehat{V}_{t_{k-1}}) \ds \Delta t \ \ \text{ and } \ \ \widehat{M}_k(\theta) = \phi(\widehat{V}_{t_{k-1}}; \theta) \Delta t + \frac{\rho}{\xi} \ \frac{f(\widehat{V}_{t_{k-1}})}{g(\widehat{V}_{t_{k-1}})} \Delta \widehat{V}_k.
\end{align*}
Note that, for the sake of notational simplicity, we are dropping the superscript $n, N$ from the volatility estimate and assuming that $K=2N+1$.

\begin{remark}
For fixed $(t_k)_{k=1,\ldots,K}$, under some additional regularity assumptions, one could show the convergence of the approximated likelihood $\widehat{\ell}_{n,N}$ to $\ell$, as $n$ and $N$ go to infinity satisfying $\ds \lim_{n, N \to +\infty} n/N^\gamma > 0$, for some $\gamma > 1$. Although the rigorous verification of this convergence is outside the scope of this letter, in Figure \ref{fig:qq_plot} we motivate this result by comparing the quantiles of two different normal distributions:
$$\epsilon_k = \frac{\Delta X_k - M_k(\theta)}{\sqrt{1 - \rho^2}F_k} \mbox{ and } \widehat{\epsilon}_k = \frac{\Delta X_k - \widehat{M}_k(\theta)}{\sqrt{1 - \rho^2}\widehat{F}_k}$$
Note that on both distributions the parameters are set to their true values and on the first model we condition on the true volatility path.
\end{remark}

\subsection{Parameter identifiability}

Firstly, the variance of the increments of $X$ depends only on the unknown parameter $\rho$ through its square.
This means that absolute value of $\rho$ is identifiable. See \ref{sec:identifiability}
for a more throughout discussion on the identifiability issue for parameter estimation.
Secondly, the presence of a Brownian motion in $V$ makes the increments of $F(V)$ and the
integrals of $\phi(V;\theta)$ fundamentally different, i.e. the increments of $F(V)$ have the rough behavior of the Brownian
motion and the integrals of $\phi(V;\theta)$ are of bounded variation, which gives us a strong indication that $\beta$ is identifiable.

Therefore, since $\xi > 0$, we have $\mbox{sign}(\rho) = \mbox{sign}(\beta)$, which implies $\rho = |\rho| \mbox{sign}(\beta)$ and $\xi = \rho/\beta$ are identifiable in the estimation of the general statistical
model (\ref{eq:regression_general}). In order to study the identifiability of the other parameters $\mu, \kappa$ and $m$, one would need to specify the volatility functions $f$ and $g$.

In the equi-volatility models, using the specification of $\phi$ given in Table \ref{table:phi}, the first integral in (\ref{eq:conditional_mean}) involves a constant and an integral of $V$:
$$\int_{t_{k-1}}^{t_k} \phi(V_u; \theta) du = \left(\mu - \frac{\kappa \rho m}{\xi}\right)(t_k - t_{k-1})  +
\frac{\kappa \rho}{\xi} \int_{t_{k-1}}^{t_k} V_u du$$
Since these two terms are different functions of the data, we may conclude that we can identify the coefficients of the time increment and the integral of $V$, which implies that $\kappa$ is identifiable, but $\mu$ and $m$ are not.
In the exp-OU model, following a similar reasoning, we conclude that $\mu$, $m$ and $\kappa$ are identifiable.

\begin{remark}[A drawback of frequentist inference] As the model in (\ref{eq:approx_likelihood}) is a simple linear regression it is clear that a pure likelihood-based estimation procedure could possibly estimate the variance term $1-\rho^2$ by a negative value. In order to avoid inestimability, the Bayesian procedure proposed in the next section assigns the uniform prior on the interval $[-1,1]$ to $\rho$.
\end{remark}

\subsection{The estimation procedure and implementation on selected models}

In order to construct the posterior distribution of the parameters of interest, the (approximated) likelihood in (\ref{eq:approx_likelihood}) is combined with appropriated prior distributions. Samples from the posterior distribution are generated using the following procedure:

\begin{enumerate}

\item from the observed data $X_{s_1}, \ldots, X_{s_n}$, estimate $V_{t_1}, \ldots, V_{t_N}$ using the
Cuchiero-Teichmann procedure described in Section \ref{sec:teichmann};

\item assign independent prior distributions to $\xi$ and $\rho$ respecting the restrictions that $\xi > 0$ and $\rho \in [-1,1]$;

\begin{itemize}

\item[--] in our procedure, we choose uninformative priors on (sensible) finite intervals, see Table \ref{table:posterior_heston};

\end{itemize}

\item using $(X_{t_1}, \widehat{V}_{t_1}), \ldots, (X_{t_N}, \widehat{V}_{t_N})$
and Equation (\ref{eq:approx_likelihood}), generate samples from the posterior distributions of $(\xi, \rho)$;

\begin{enumerate}

\item Under the equi-volatility models, we may also generate samples from the posterior distribution of $\kappa$,
but we cannot separate the effects of $\mu$ and $m$, see Table (\ref{table:phi}).

\item Under the Exp-OU model, we are able to generate samples from the posterior distribution of all the other parameters:
$\kappa, m$ and $\mu$.

\end{enumerate}

\end{enumerate}

Step 3 of the above procedure is performed with the aid of a Hamiltonian Monte Carlo algorithm, implemented through \texttt{R-Stan} \cite{rstan}.

\section{Numerical exercise}

The numerical experiment will consider simulated data from the Heston model (an example of equi-volatility model) and the exp-OU model (where $f$ and $g$ are different). The function $h$ for both models is defined as in the Example \ref{exam:ct_estimator}. The parameters (of the model and the numerical procedure) are described in Table \ref{table:param} and the prior distributions in Table \ref{table:posterior_heston}.

\begin{table}[h!]
\centering
\begin{tabular}{clc}
\hline
Parameter & Description & Value \\
\hline
\hline\\[-10pt]
$T$ & Time horizon & 1.0 \\
$n$ & Number of observations & $2^{19}$ \\
$N$ & C-T Frequency & $2^9$\\
$X_0$ & initial log-price & 0.0\\
$\mu$ & return rate & 0.0\\
\hline
\end{tabular}
\quad
\begin{tabular}{clc}
\hline
Parameter & Description & Value \\
\hline
\hline\\[-10pt]
$V_0$ & initial variance & 0.09\\
$\kappa$ & mean-reversion rate & 5.0\\
$m$ & long-run mean & 0.02\\
$\xi$ & vol-of-vol & 0.5\\
$\rho$ & correlation & -0.3\\
\hline
\end{tabular}
\caption{Parameters' Description and Values}
\label{table:param}
\end{table}

\begin{table}[h!]
\centering
\begin{tabular}{cl}
\multicolumn{2}{c}{}\\
\hline
Parameter & Priors \\
\hline
\hline\\[-10pt]
$\rho$ & $U[-1,1]$ \\
$\xi$ & $U[0,5]$ \\
$\kappa$ & $U[0,100]$ \\
$m$ & $U[0,1]$ \\
$\mu$ & $U[-1,1]$ \\
\hline
\end{tabular} \label{table:prior}
\quad
\begin{tabular}{ccc}
\multicolumn{3}{c}{Heston}\\
\hline
Median & 2.5\% & 97.5\% \\
\hline
\hline\\[-10pt]
-0.2890 & -0.4667 & -0.0080\\
0.3444 & 0.1034 & 0.8178\\
1.2438 & 0.0529 & 6.3202\\
\\
\\
\hline
\end{tabular}
\quad
\begin{tabular}{ccc}
\multicolumn{3}{c}{Exp-OU}\\
\hline
Median & 2.5\% & 97.5\% \\
\hline
\hline\\[-10pt]
-0.1946 & -0.4194 & 0.0082\\
0.9183 & 0.1667 & 4.7541\\
9.6601 & 0.2432 & 89.1850\\
0.2796 & 0.0075 & 0.9422\\
-0.3412 & -0.9704 & 0.8405\\
\hline
\end{tabular}
\caption{Priors and Posteriors distributions}
\label{table:posterior_heston}
\end{table}

In Figure \ref{fig:vol} the true volatility process (dark blue) and its Cuchiero-Teichmann estimate (light blue)
are presented, both for the (a) Heston and the (b) Exp-OU models. It can be seen that the estimate is able to closely
follow the true unobserved paths for both models, with the estimate for the Heston model been (at least visually) more precise.
As the estimate is \emph{offline}, in the sense that it its computed for a batch of data and needs to be recomputed when new
observation arrive, errors at the beginning/end of the estimation window should not be notoriously different.
It should also be noticed that the estimates are reliable at both low and high (absolute) volatility regimes, see Figure \ref{fig:vol}.
One important aspect, though, is the apparent lower volatility of the estimated paths, i.e., the reconstructed functions
appear to be smoother than the original ones.

As hinted by Figure \ref{fig:vol}, Figure \ref{fig:param} shows that the parameters of the Heston model are,
indeed, better estimated when compared to the exp-OU model. For both models we present all two dimensional
posterior distributions in the lower triangular plots. At these plots we also include (in blue) the real
 parameter values used in the data simulation. In the main diagonal plots we present the histogram of the
 identifiable parameters, where the blue solid line represents the real value and the dashed line its marginal
 posterior mean. Both for the Heston and the exp-OU models the correlation parameter $\rho$ is estimated remarkably well,
 with high posterior probability been assigned to its correct sign (negative on both cases). While the two remaining parameters
 of the Heston model are also very well estimated, the same does not hold true for the other exp-OU parameters, as most of them
 have uniform marginal posterior distributions.

It should be stressed that these results are consistently observed for these models, independent of the particular sample path generated, which leads us to believe that the proposed method can be a competitive alternative to the state-of-the art algorithms used for inference in continuous time stochastic volatility models.

\begin{figure}[h!]
\centering
\begin{subfigure}{.5\textwidth}
  \centering
  \includegraphics[width=\linewidth]{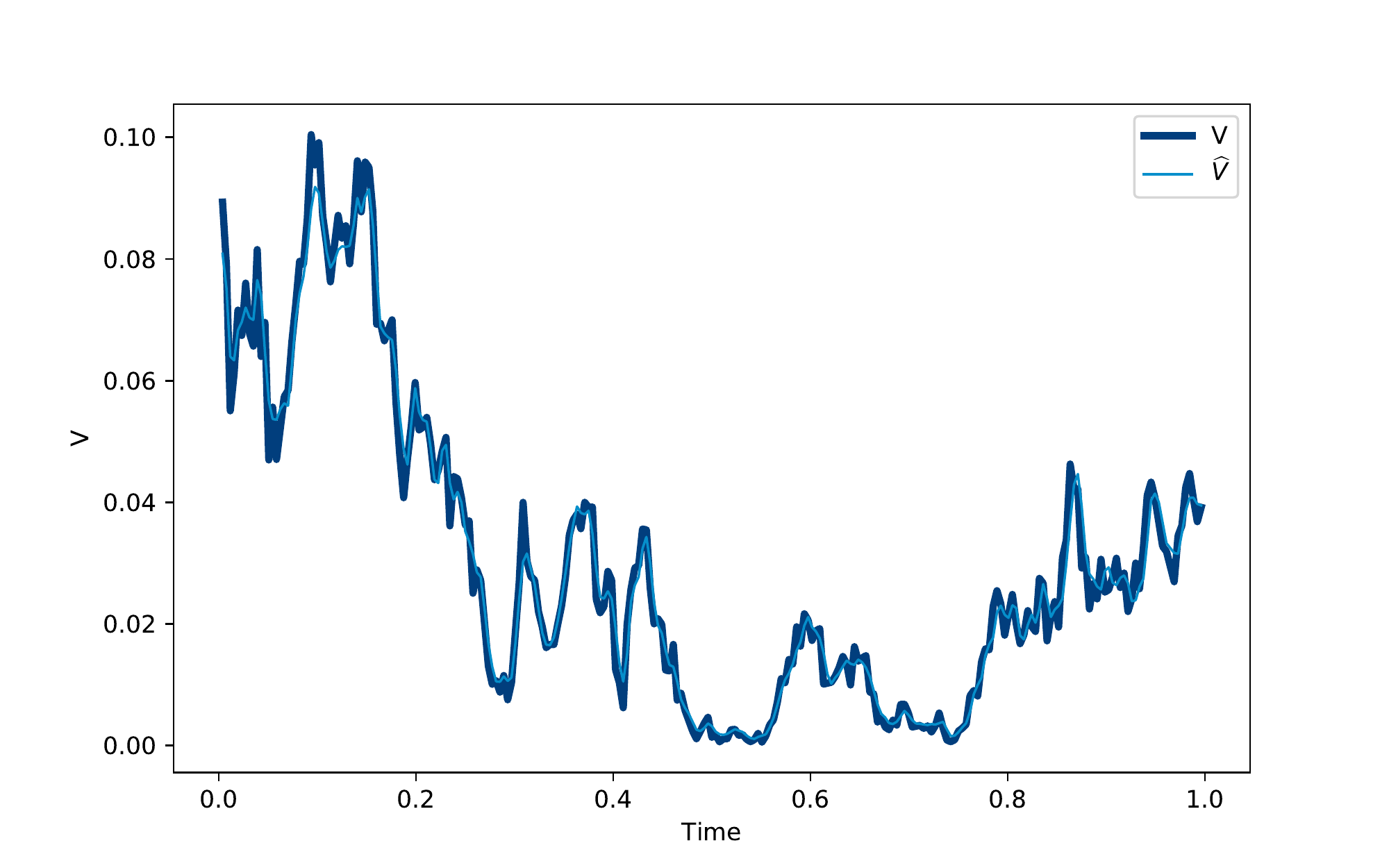}
  \caption{Heston}
  \label{fig:sub1}
\end{subfigure}%
\begin{subfigure}{.5\textwidth}
  \centering
  \includegraphics[width=\linewidth]{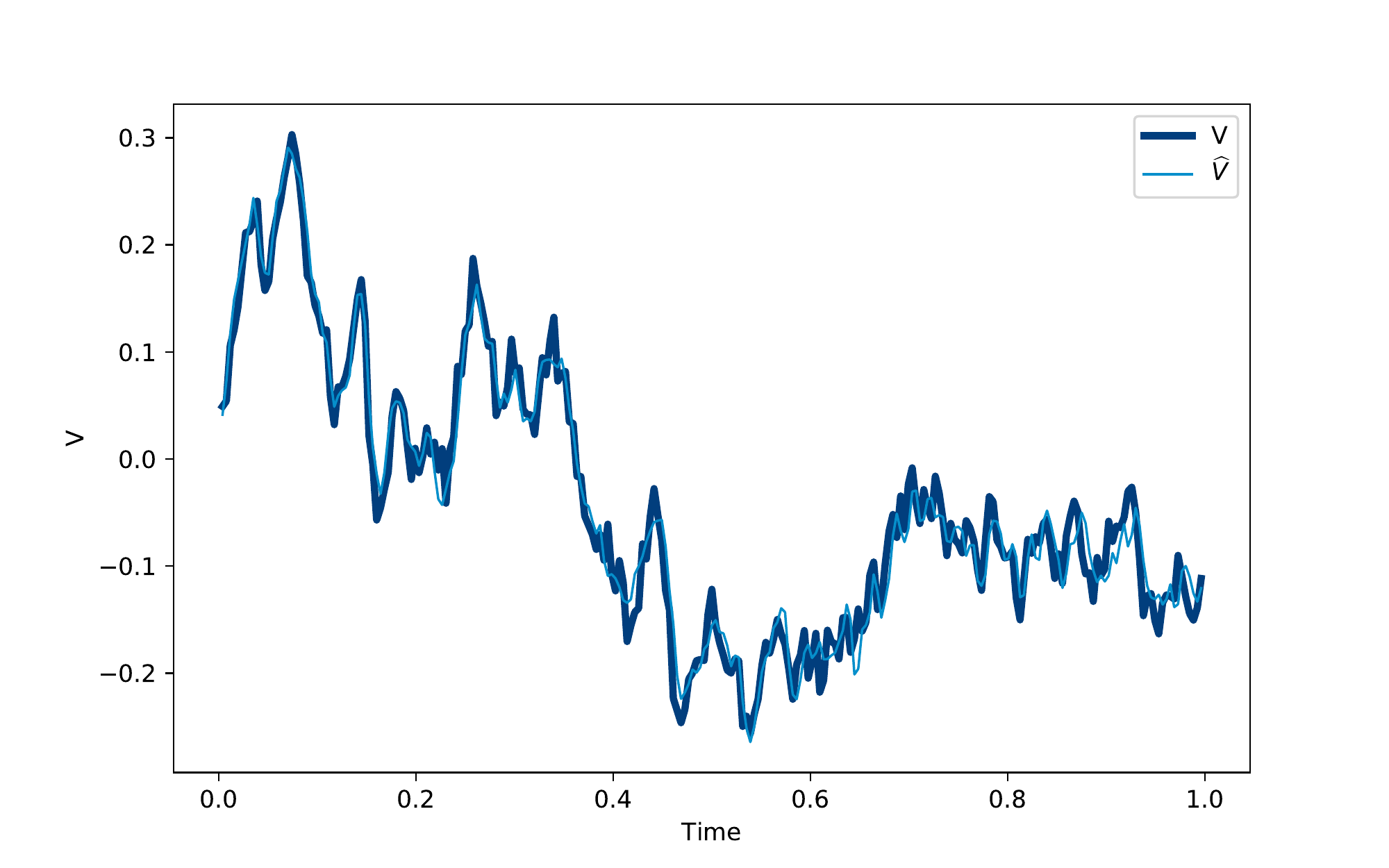}
  \caption{Exp-OU}
  \label{fig:sub2}
\end{subfigure}
\caption{Estimation of the volatility process $V$ using Cuchiero-Teichmann procedure.}
\label{fig:vol}
\end{figure}

\begin{figure}[h!]
\centering
\begin{subfigure}{.5\textwidth}
  \centering
  \includegraphics[width=\linewidth]{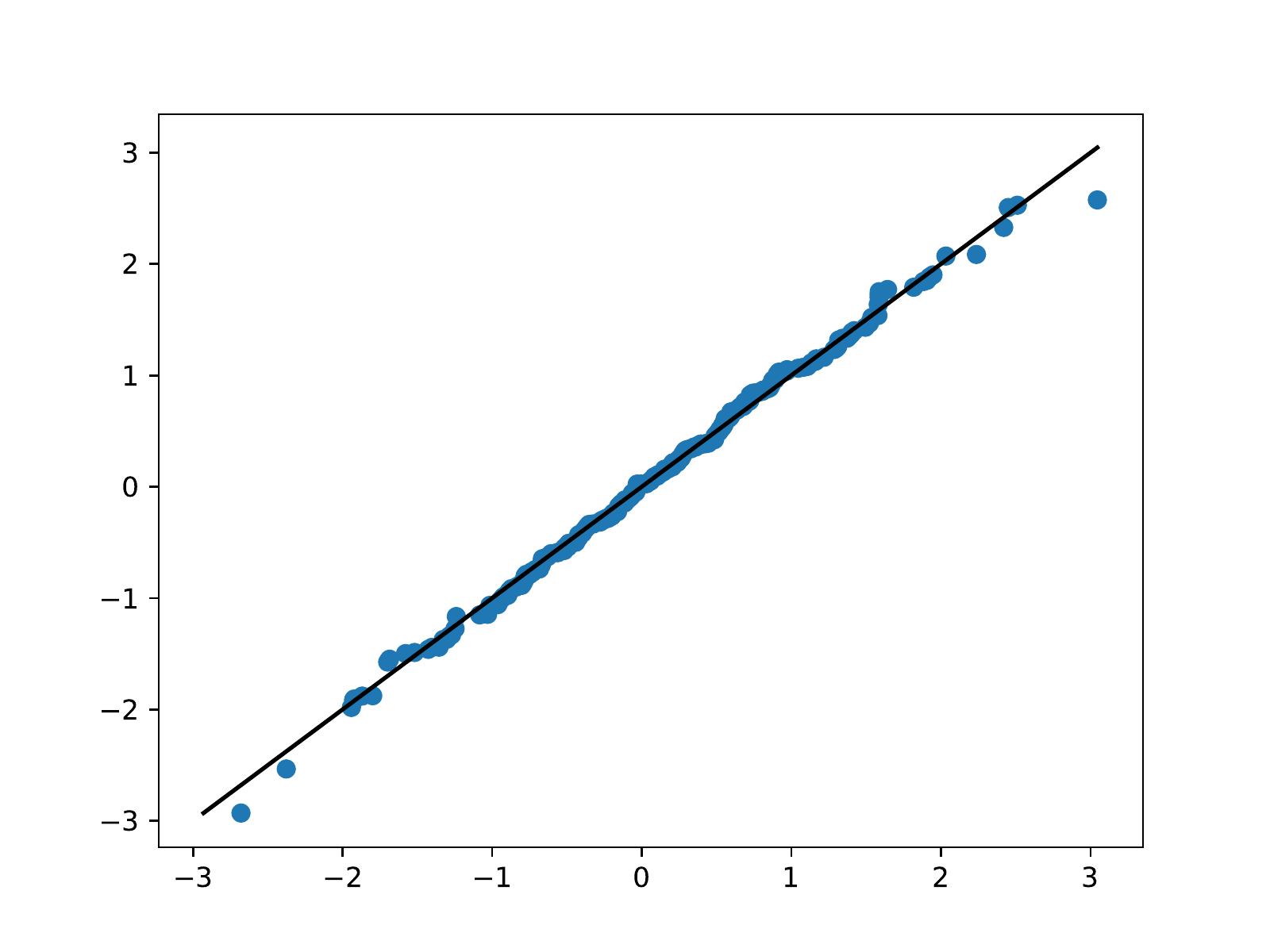}
  \caption{Heston}
  \label{fig:sub1}
\end{subfigure}%
\begin{subfigure}{.5\textwidth}
  \centering
  \includegraphics[width=\linewidth]{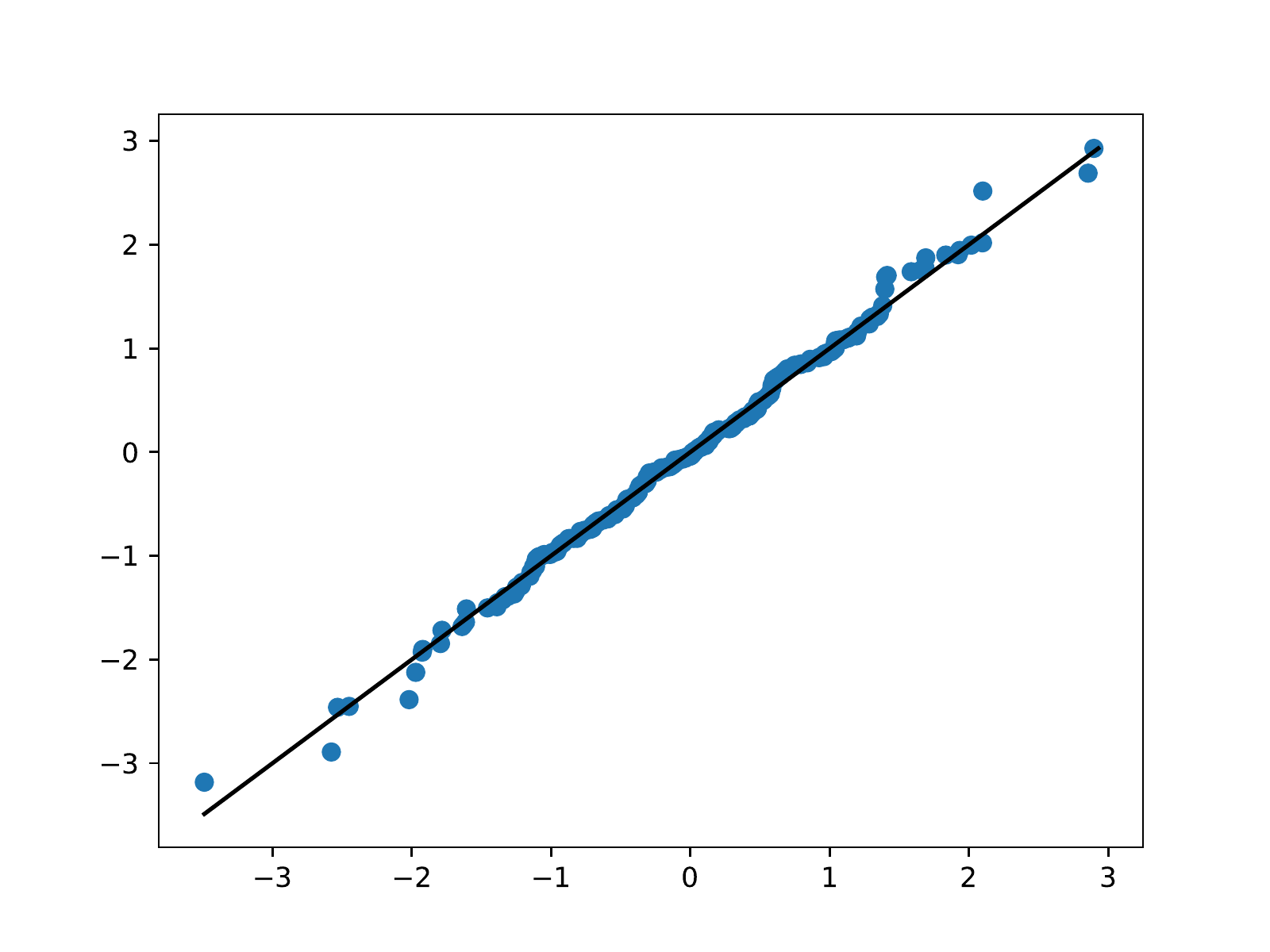}
  \caption{Exp-OU}
  \label{fig:sub2}
\end{subfigure}
\caption{QQ-plot of $\epsilon$ against $\hat{\epsilon}$ for both models setting the parameters as the true ones.}
\label{fig:qq_plot}
\end{figure}

\begin{figure}[h!]
\centering
\begin{subfigure}{.5\textwidth}
  \centering
  \includegraphics[width=\linewidth]{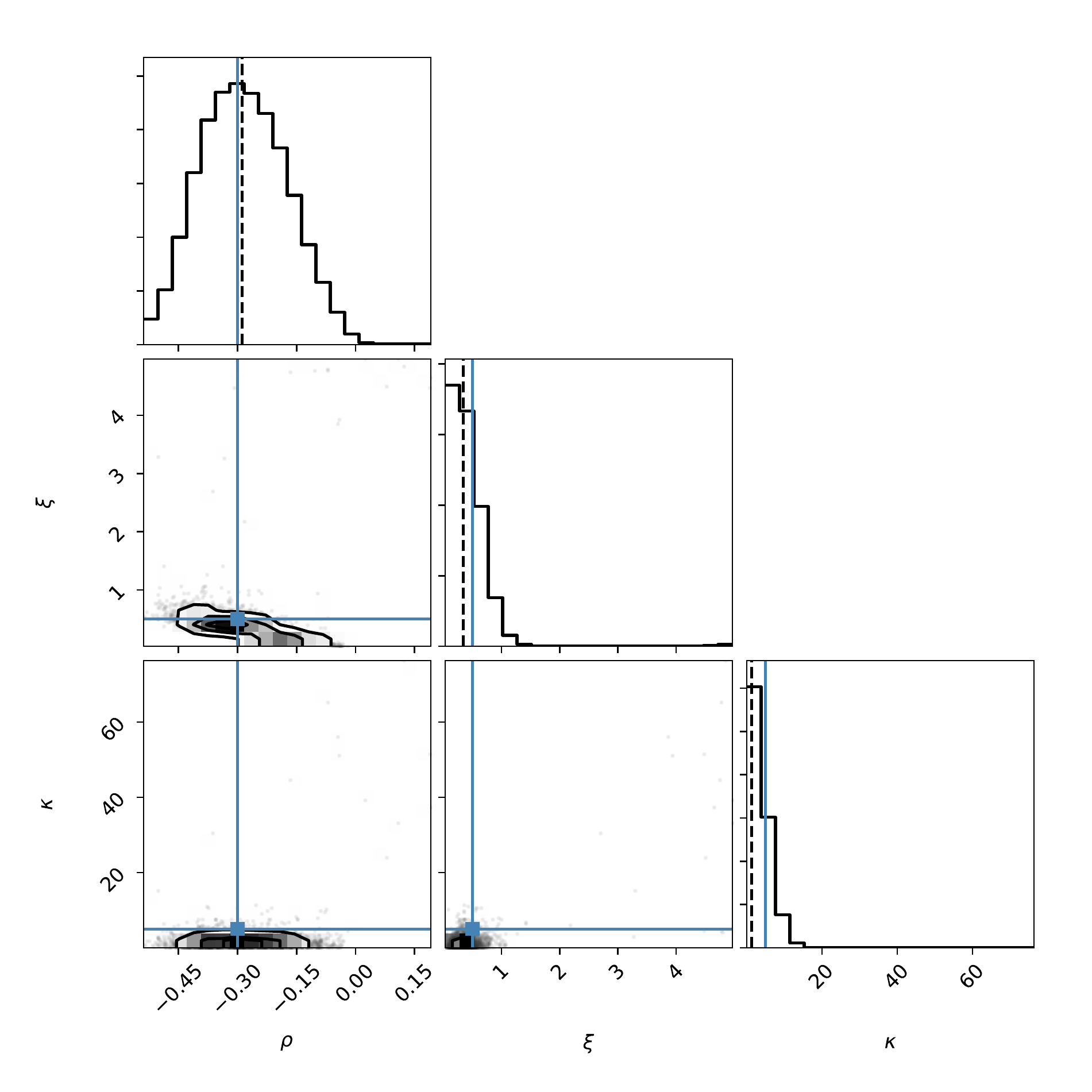}
  \caption{Heston}
  \label{fig:sub1}
\end{subfigure}%
\begin{subfigure}{.5\textwidth}
  \centering
  \includegraphics[width=\linewidth]{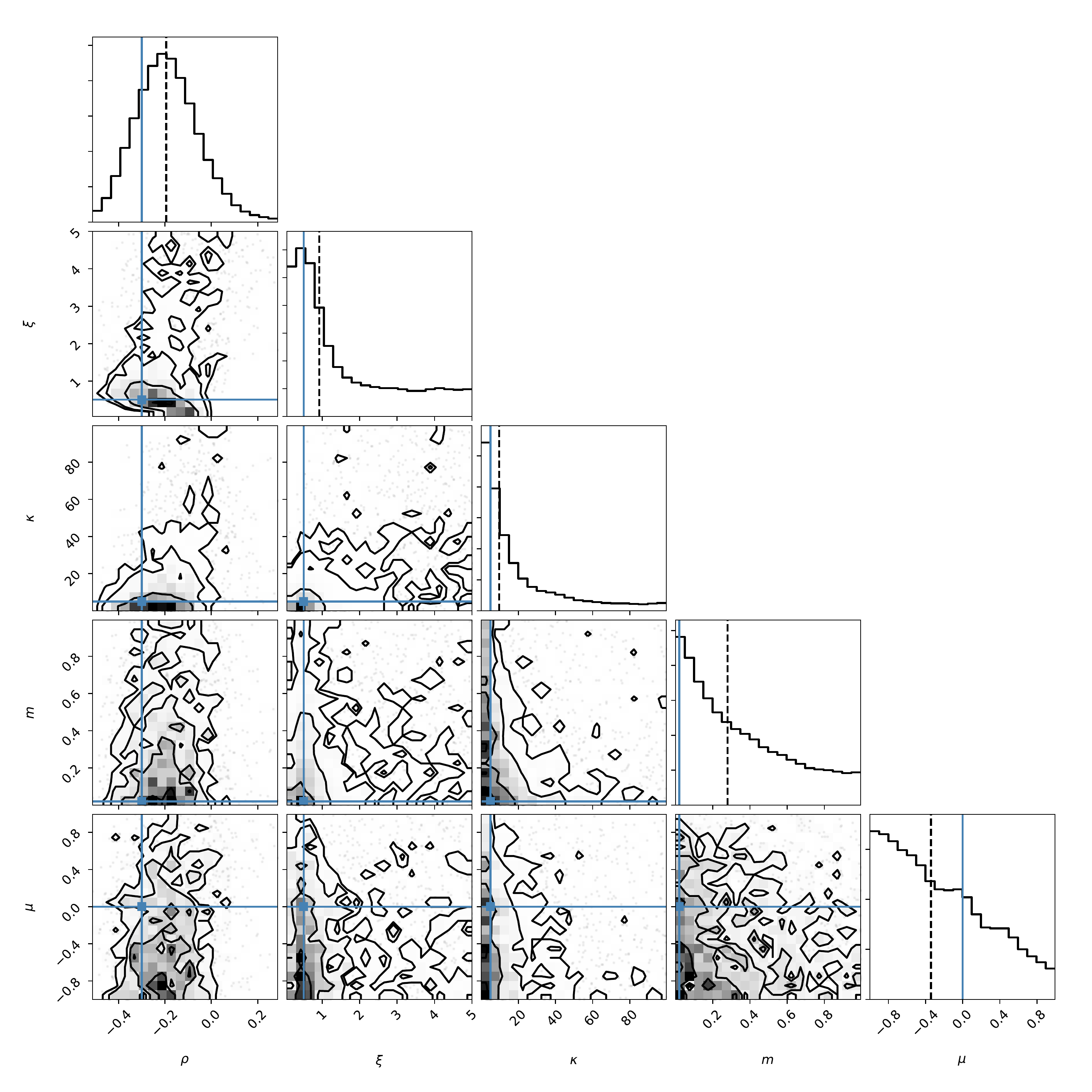}
  \caption{Exp-OU}
  \label{fig:sub2}
\end{subfigure}
\caption{Estimation of the identifiable parameters of each analyzed model.}
\label{fig:param}
\end{figure}

\noindent {\bf Acknowledgemens.} We express our gratitude to Josef Teichmann for sharing with us the code for calculation of the
estimator that we used in this paper. Milan Merkle
acknowledges the support by grants III 44006 and 174024 from Ministry of Education, Science and
Technological Development of Republic of Serbia. Yuri F. Saporito
acknowledges the support by grant 210.168/2017 from Funda\c{c}\~ao Carlos Chagas Filho de Amparo \`a Pesquisa do Estado do Rio de Janeiro.

\bibliographystyle{plain}

\appendix

\section{Identifiability}\label{sec:identifiability}

Given a stochastic model with a parameter $\theta \in \Theta$, let $\hat{\theta}$ be an estimator for $\theta$ based on a sample of size $n$. Let
 $\hat{\theta}\; |\; \theta_0$ denote the random element $\hat{\theta}$ under condition that the true value of the parameter $\theta$ equals $\theta_0$.
 It is said that $\theta$ is non-identifiable if for any $K\in \bN$
there is a sample of size $n\geq K$ and two distinct values $\theta_1,\theta_2\in \Theta$  such that the random elements
$\hat{\theta}\; |\; \theta_1$  and $\hat{\theta}\; |\; \theta_2$
have the same probability distribution.
This can be one among many possible  ways to define  the phenomenon that has been discussed
through statistical literature since long ago.
There is a considerable number of papers on the topic, especially in the econometric context, see \cite{malinvaud1980+} or \cite{hamuma2016+}
in the framework of Bayesian methods, and many others.

The parameter $\theta$ is said to be identifiable if it is not non-identifiable. Note that identifiability property is relative to the given estimator.
As shown in \cite{gabriel1978+}, the identifiability is a weaker property than consistency: if  $\hat{\theta}$ is consistent estimator of $\theta$, then
$\theta$ is identifiable; however $\theta$ can be identifiable even if there does not exist a consistent estimator.

\end{document}